\newtheorem{theorem}{Theorem}
\newtheorem{corollary}[theorem]{Corollary}
\newtheorem{example}[theorem]{Example}
\newtheorem{conjecture}[theorem]{Conjecture}
\numberwithin{theorem}{section}
\numberwithin{equation}{section}
\numberwithin{table}{section}
\newcommand{\Z}{\mathbb{Z}}
\newcommand{\Q}{\mathbb{Q}}
\newcommand{\Res}{\mathrm{Res}}
\newcommand{\cont}{\mathrm{cont}}
\newcommand{\wH}{\mathrm{H}}
\begin{document}

	\title[B\'{e}zout's identity and resultants]{Three integers arising from B\'{e}zout's identity and resultants of integer polynomials}
	
	\author{Zhiqian Liu}
	\address{School of Mathematical Sciences, South China Normal University, Guangzhou, 510631, China}
	\email{20222231028@m.scnu.edu.cn}
	
	\author{Xiaoting Li}
	\address{School of Mathematical Sciences, South China Normal University, Guangzhou, 510631, China}
	\email{20222221038@m.scnu.edu.cn}
	
	\author{Wenheng Liu}
	\address{School of Mathematics and Statistics, Central South University, Changsha, 410083, China}
	\email{whliu1023@csu.edu.cn}

	\author{Min Sha}
	\address{School of Mathematical Sciences, South China Normal University, Guangzhou, 510631, China}
	\email{min.sha@m.scnu.edu.cn}

	\subjclass[2020]{11C08, 11R09}
	
	\keywords{Integer polynomial, B\'{e}zout's identity, resultant, reduced resultant}

\begin{abstract}
		In this paper, we study three integers arising naturally from B\'{e}zout's identity, the resultant and the reduced resultant of two coprime integer polynomials. We establish several new divisibility relations among them. We also pose two conjectures by making computations. 
	\end{abstract}
	
	\maketitle

	\section{Introduction}
	In this paper, integer polynomials are meant to be polynomials in $\Z[x]$, that is, polynomials in the variable $x$ and with integer coefficients.
	They are definitely among the fundamental objects in algebra and number theory. 
	The research around their arithmetic properties has a very long history and is still very active. 
	The arithmetic properties include reducibility, degeneracy, decomposability, moduli of roots, signature of roots,  multiplicative dependence of roots, etc. See \cite{Aki, BHP, BEGRS, BBBCM, BDFPS, BDPS, Du2014, Du2016, Du2018,DS1, DS2, DS3, DS4, DS5, DS6, HTV, Kuba, ZYZ} for some recent developments. 
	
	In this paper, we study the relations among three integers arising naturally from B\'{e}zout's identity, the resultant and the reduced resultant of two coprime integer polynomials with positive degree. 
	Here, we say two integer polynomials are coprime if they don't have a common root. 
	
	From now on, $f(x)$ and $g(x)$ are two coprime integer polynomials of positive degree. 
	Below we define the three integers studied in this paper with respect to $f(x)$ and $g(x)$.
	
	By B\'ezout's identity, we know that there exist uniquely two polynomials $p(x)$ and $q(x)$ in $\Q[x]$ such that:
	\begin{equation} \label{eq:Bezout}  
		p(x)f(x)+q(x)g(x) = 1, \quad \deg (p) < \deg (g), \quad \deg (q) < \deg (f).
	\end{equation}
	Then, we define the integer $B(f, g)$ to be the least common multiple of the denominators of all the coefficients  of $p(x)$ and $q(x)$ in \eqref{eq:Bezout} (the coefficients are reduced to their lowest terms). Letting $B=B(f,g)$, we have 
	\begin{equation}\label{eq:uvB}
		Bp(x)f(x) + Bq(x)g(x) = B, \quad Bp(x), Bq(x) \in \Z[x].
	\end{equation}
	
	Since $f(x)$ and $g(x)$ are non-constant and coprime, their resultant $\Res(f,g)$ is a non-zero integer. 
	Then, we define the integer $R(f,g) = |\Res(f,g)|$. It is well known that the resultant $\Res(f,g)$ 
	can be expressed as a linear combination of $f(x)$ and $g(x)$ over the polynomial ring $\Z[x]$; 
	see \cite[page 157, Proposition 9]{Cox}. 
	
	In addition, we define the integer $r(f, g)$ to be  the smallest positive integer 
	in the following set 
	$$
	f(x)\Z[x] + g(x)\Z[x] = \left\{p(x)f(x)+q(x)g(x):\,p(x),q(x)\in\mathbb{Z}[x] \right\}.
	$$
	This integer is called \textit{reduced resultant} in \cite{Pohst}. 
	In addition, it is also called \textit{congruence number} in \cite[Definition 2.6]{TW}, 
	and moreover, the authors in \cite{TW} also described its applications.
	
	By definition, $r(f,g)$ is the smallest positive integer  in the intersection 
	$(f(x)\Z[x] + g(x)\Z[x]) \cap \Z$, which in fact is an ideal of the integer ring $\Z$ and so is equal to $r(f,g)\Z$. 
	Thus, by definition we directly have 
	\begin{equation} \label{eq:rBR}
		r(f,g) \mid B(f,g) \quad \textrm{and} \quad r(f,g) \mid R(f,g).
	\end{equation}
This was known previously. 
	
	Moreover, for the ideal generated by $f(x)$ and $g(x)$ in $\Z[x]$, if we compute its Gr\"{o}bner basis (see \cite{Cox}), then the constant polynomial in this basis is exactly $r(f,g)$. 
	In this paper, we compute $r(f,g)$ based on this fact and using Mathematica. 
	
	From the discussions in MathOverflow (see, for instance, \cite{Kana, Rug, Voloch}), 
	one can see that many people are interested with the relations among 
	the three integers $B(f, g)$, $r(f, g)$ and $R(f, g)$. 
	However, so far we haven't found a paper studying their relations systematically. 
	
	In this paper, we want to investigate systematically the divisibility relations among the three integers $B(f, g)$, $r(f, g)$ and $R(f, g)$. We believe that some of them are new; such as Theorems~\ref{thm:Bdr}, \ref{thm:rR}, \ref{thm:BR1} and \ref{thm:BR2}. We also pose two conjectures at the end. 

To avoid systematic repetitions, throughout the paper we fix the following assumptions and notations: 
\begin{itemize}
\item $f(x)$ and $g(x)$ are two coprime integer polynomials of positive degree; 

\item for every $h \in \Z[x]$, $L(h)$ is the leading coefficient of $h$, 
and $\cont(h)$ is the greatest common divisor of its coefficients; 

\item $d = \gcd(L(f), L(g))$.
\end{itemize}

	\section{About the resultant}
	In this section, we present some results about the resultant $\Res(f,g)$ which are needed later on.   
	
	For two integer polynomials $h_1(x)$ and $h_2(x)$, recall that their resultant $\Res(h_1,h_2)$ is defined to be 
	the determinant of their Sylvester matrix. 
	We first list three classical properties of resultants as follows (see \cite{Cox} or \cite{GKZ} for their proofs): 
	\begin{equation}\label{eq:hq}
		\Res(h_1,h_2)=(-1)^{\deg(h_1)\deg(h_2)} \Res(h_2,h_1),
	\end{equation}
	\begin{equation}\label{eq:hq1q2}
		\Res(h,h_1h_2)=\Res(h,h_1)\Res(h,h_2),
	\end{equation}
	\begin{equation}\label{eq:ch}
		\Res(c,h)=\Res(h,c)=c^{\deg(h)},
	\end{equation}
	where $h, h_1, h_2$ are integer polynomials of positive degree, and $c$ is a non-zero constant. 
	The result in \cite[Lemma 4.1]{Dilcher} asserts that 
	if we can write $h_2= sh_1+t$ with integer polynomials $s,t$, then 
	\begin{equation}\label{eq:qhL}
		\Res(h_1,h_2)=L(h_1)^{\deg(h_2)-\deg(t)}\Res(h_1,t).
	\end{equation}
	
	The following result except the divisibility part ``$d \mid \cont(p)$, $d \mid \cont(q)$" is well-known 
(see \cite[Proposition 4.18]{BPR} or \cite[Lemma 7.2.1]{Mishra}). 
We follow the approach in \cite[Lemma 7.2.1]{Mishra} to recover the well-known part and also prove the divisibility part. 
	
	\begin{theorem}\label{thm:uvR}
 There are two unique integer polynomials $p(x)$ and $q(x)$ such that $\deg(p) < \deg(g)$, $\deg(q) < \deg(f)$, $d \mid \cont(p)$, $d \mid \cont(q)$, and 
		\begin{equation*}
			p(x)f(x)+q(x)g(x) = R(f,g).
		\end{equation*}
		In particular, $d \mid R(f,g)$. 
	\end{theorem} 	
	\begin{proof}
		First, we write 
		\begin{align*}\label{def:f,g}
			&f(x)=a_{m}x^{m}+a_{m - 1}x^{m - 1}+\cdots+a_1x + a_{0},\\
			&g(x)=b_{n}x^{n}+b_{n - 1}x^{n - 1}+\cdots+b_1x + b_{0}, 
		\end{align*} 
		with $a_mb_n \ne 0$.  
		Regarding the Sylvester matrix of $f(x)$ and $g(x)$, for each $i$ ($1\leq i\leq m + n - 1$), we multiply the $i$-th column by $x^{m + n - i}$ and add it to the $(m + n)$-th column (here we view $x$ as an arbitrary non-zero integer). Then, we obtain
		$$
		\Res(f,g)= \det \left(\begin{array}{ccccccc}
			a_{m}&a_{m-1}&a_{m-2}&\cdots&0&0&x^{n-1}f\\
			0&a_{m}&a_{m-1}&\cdots&0&0&x^{n-2}f\\
			0&0&a_{m}&\cdots&0&0&x^{n-3}f\\
			\vdots&\vdots&\vdots&\vdots&\vdots&\vdots&\vdots\\
			0&0&0&\cdots&a_1&a_0&xf\\
			0&0&0&\cdots&a_2&a_1&f\\
			b_{n}&b_{n-1}&b_{n-2}&\cdots&0&0&x^{m-1}g\\
			0&b_{n}&b_{n-1}&\cdots&0&0&x^{m-2}g\\
			\vdots&\vdots&\vdots&\vdots&\vdots&\vdots&\vdots\\
			0&0&0&\cdots&b_1&b_0&xg\\
			0&0&0&\cdots&b_2&b_1&g
		\end{array}
		\right).$$
		In view of $d=\gcd(L(f), L(g))$, we can write $a_m=ad, b_n= bd$ for two coprime integers $a, b$. 
		Then, we obtain
		\begin{equation*}
			\begin{aligned}
				\Res(f,g)&
				=d \cdot \det \left(
				\begin{array}{ccccc}
					a&a_{m - 1}&\cdots&0&x^{n - 1}f\\
					0&a_{m}&\cdots&0&x^{n - 2}f\\
					\vdots&\vdots&\vdots&\vdots&\vdots\\
					0&0&\cdots&a_1&f\\
					b&b_{n - 1}&\cdots&0&x^{m - 1}g\\
					0&b_{n}&\cdots&0&x^{m - 2}g\\
					\vdots&\vdots&\vdots&\vdots&\vdots\\
					0&0&\cdots&b_1&g
				\end{array}
				\right). 
			\end{aligned}
		\end{equation*}
		We denote by $A$ the determinant on the right hand side of the above identity. 
		For $1 \le i, j \le m+n$, we denote the $(i,j)$-th cofactor of $A$ by $A_{ij}$. 
		Then, expanding $A$ via cofactors along the last column, we get
		\begin{align*}
			\Res(f,g)&=d \Big(x^{n-1}fA_{1,m+n}+ \cdots +fA_{n,m+n} +\\ 
			& \qquad \qquad x^{m-1}gA_{n+1,m+n}+ \cdots +gA_{m+n,m+n} \Big) \\ 
			&=d(x^{n-1}A_{1,m+n}+ \cdots +A_{n,m+n})f + \\ 
			& \qquad\qquad d(x^{m-1}A_{n+1,m+n}+ \cdots +A_{m+n,m+n})g\\
			&=p_1(x)f(x) + q_1(x)g(x),
		\end{align*} 
		where 
		\begin{align*}
			& p_1(x)=d(A_{1,m+n}x^{n-1}+ \cdots +A_{n,m+n}) \in \Z[x], \\
			& q_1(x) = d(A_{n+1,m+n}x^{m-1}+ \cdots +A_{m+n,m+n}) \in \Z[x].
		\end{align*}
		
		Taking absolute value, we obtain 
		\begin{equation}  \label{eq:RRes}
			R(f,g)=|\Res(f,g)|
			=p(x)f(x)+q(x)g(x),
		\end{equation}
		where $p(x)=p_1(x)$ and $q(x)=q_1(x)$ if $\Res(f,g)>0$, and otherwise $p(x)=-p_1(x)$ and $q(x)=-q_1(x)$. 

Note that in the above deductions, we view $x$ as an arbitrary non-zero integer. 
That is, \eqref{eq:RRes} holds for any non-zero integer $x$. 
So, \eqref{eq:RRes} also holds if we view $x$ as a variable. 
		
		Noticing that each cofactor  $A_{i, m+n}$ ($1 \le i \le m+n$) is an integer, 
		clearly we have 
		$$
		d \mid \cont(p), \qquad d \mid \cont(q). 
		$$

		Meanwhile, it can be seen that  $\deg(p)\leq m - 1<\deg(g)$, $\deg(q)\leq n - 1<\deg(f)$. 
		Finally, since $f$ and $g$ are coprime, one can prove by contradiction that $p(x)$ and $q(x)$ are unique. 	
	\end{proof}

	\section{Relations between $B(f,g)$ and $r(f,g)$}  
	We have already known that $r(f,g)\mid B(f,g)$ by \eqref{eq:rBR}. 
	The following result is about the converse of this divisibility relation. 
Recall that $d=\gcd(L(f), L(g))$.
	
	\begin{theorem}  \label{thm:Bdr}
		There exists a non-negative integer $k$ such that
		\begin{equation*}
			B(f,g) \mid d^k r(f,g). 
		\end{equation*}
	\end{theorem}
	
	\begin{proof}
		For simplicity, let $B = B(f,g)$ and $r = r(f,g)$. By definition, there exist two integer polynomials $p(x)$ and $q(x)$ such that
		\begin{equation}\label{eq:uvr}
			p(x)f(x) + q(x)g(x) = r. 
		\end{equation}
		Since $r \in \Z$, we must have 
		\begin{equation}  \label{eq:deg}
			\deg(p) + \deg(f) = \deg(q)+\deg(g).
		\end{equation} 
		So, either $\deg(p) < \deg(g)$ and $\deg(q) < \deg(f)$, or $\deg(p) \geq \deg(g)$ and $\deg(q) \geq \deg(f)$.

		If  $\deg(p) < \deg(g)$ and $\deg(q) < \deg(f)$, then we have 
		\begin{equation*}
			\frac{p(x)}{r}f(x) + \frac{q(x)}{r}g(x) = 1. 
		\end{equation*}
		According to the definition of $B$ (that is, $B(f,g)$), we have $B \mid r$.
		
		Now, we assume that $\deg(p) \geq \deg(g)$ and $\deg(q) \geq \deg(f)$. 
		From \eqref{eq:uvr} and noticing $r \in \Z$, we have $L(pf) = -L(qg)$, that is, 
		$$
		L(p)\cdot L(f) = -L(q)\cdot L(g).
		$$
		Moreover, we get
		\begin{equation}\label{Lk}
			L(p) \cdot \frac{L(f)}{d}=-L(q)\cdot \frac{L(g)}{d}.
		\end{equation}
		Since $\gcd(\frac{L(f)}{d},\frac{L(g)}{d}) = 1$, we have $\frac{L(f)}{d} \mid L(q)$ and $\frac{L(g)}{d} \mid L(p)$. Then, we define two integer polynomials: 
		\begin{equation*}
			h(x)=\frac{dL(p)}{L(g)}x^{\deg(p)-\deg(g)}, \qquad 
			t(x)=\frac{dL(q)}{L(f)}x^{\deg(q)-\deg(f)}.
		\end{equation*}
		By \eqref{eq:deg} and \eqref{Lk}, we obtain 	
		\begin{equation}  \label{eq:p1q1}
			h(x)+t(x)=0.
		\end{equation}
		
		So, we have
		\begin{align*}
			& d\cdot p(x)=g(x)h(x)+p_{1}(x),\\
			& d\cdot q(x)=f(x)t(x)+q_{1}(x),
		\end{align*}
		where $p_{1}(x)$ and $q_{1}(x)$ are some integer polynomials such that $\deg(p_1) < \deg(p)$ and $\deg(q_1)<\deg(q)$. Combining the above two formulas with \eqref{eq:uvr}, we get
		$$f(x)g(x)(h(x)+t(x))+p_{1}(x)f(x)+q_{1}(x)g(x)=dr,$$
		which, together with \eqref{eq:p1q1}, gives 
		$$p_{1}(x)f(x)+q_{1}(x)g(x)=dr.$$
		Repeating the above steps, after a finite number of steps (say $k$ steps), there must exist integer polynomials $p_k(x)$ and $q_k(x)$ such that $\deg(p_k) < \deg(g)$, $\deg(q_k) < \deg(f)$, and
		$$p_k(x)f(x)+q_k(x)g(x)=d^k r.$$
		Hence, as before we obtain 
		\begin{equation*}
			B \mid d^k r. 
		\end{equation*}
		This completes the proof. 
	\end{proof}
	
	We remark that the non-negative integer $k$ in Theorem~\ref{thm:Bdr} is at most $\max(\deg(p)-\deg(g)+1, 0)$ with polynomial $p$ in \eqref{eq:uvr}. So, in Theorem~\ref{thm:Bdr} we can choose 
$$
k = \max(\deg(p)-\deg(g)+1, 0).
$$
Moreover, one can find an upper bound on $\deg(p)$ for one choice of $p(x)$ in \cite[Theorem A]{Asch}. 
	
	When $d =1$ in Theorem~\ref{thm:Bdr}, we have $B(f,g) \mid r(f,g)$. Then, noticing $r(f,g)\mid B(f,g)$, we directly get the following corollary. 
We believe that this result is known to the experts (see, for instance, Myerson's answer in \cite{Rug} for the case of monic polynomials), but we haven't found a specific reference for it. 
	
	\begin{corollary}\label{cor:Br}
		If $d =1$, then 
		$B(f,g) = r(f,g)$.
	\end{corollary}

	If $d >1$, 
	then either $B(f,g) = r(f,g)$ or $B(f,g) \ne r(f,g)$. Both cases can happen; see Examples~\ref{ex:Br1} and \ref{ex:Br2}. 

	\begin{example}  \label{ex:Br1}
		{\rm 
			Let $f(x) = 6x^3 - 6x^2 - 6x-6$ and $g(x) = 6x^3 - 6x^2 - 6x + 5$. 
			Then, $B(f,g)=11$, $R(f,g) = 287496$ and $r(f,g) = 11$, and so, $R(f,g) =6^3 r(f,g)^3$.
		}
	\end{example}
	
	\begin{example} \label{ex:Br2}
		{\rm 
			Let $f(x) = 6x^{2}+5$ and $g(x) =  6x^{2}-4x + 1$. 
			Then, $B(f,g) = 22 = 2 \times 11$, $R(f,g) = 1056 = 2^5 \times 3 \times 11$ and $r(f,g) = 11$.
		}
	\end{example}

	We also remark that if $d >1$, then $B(f,g)$ and $r(f,g)$ may not have the same 
prime factors, and also $B(f,g)$ and $dr(f,g)$ may not have the same prime factors; see Example~\ref{ex:Br2}. 
However, by Theorem~\ref{thm:Bdr} we know that any prime factor of $B(f,g)$ divides $dr(f,g)$.

	\section{Relations between $r(f,g)$ and $R(f,g)$}
	We have already known that $r(f,g)\mid R(f,g)$ by \eqref{eq:rBR}. 
	The following result is about the converse of this divisibility relation. 
Recall that $d = \gcd(L(f), L(g))$.
	
	\begin{theorem}\label{thm:rR}
		We have 
		$$R(f,g) \mid d^j r(f,g)^{\max(m,n)},$$
		where $ m = \deg(f), n = \deg(g), j = \deg(pf)=\deg(qg)$ with polynomials $p,q$ in \eqref{eq:uvr}. 
Moreover, if both $f$ and $g$ are monic, then we have 
		$$R(f,g) \mid r(f,g)^{\min(m,n)}.$$
	\end{theorem}
	
	\begin{proof}
		Let $R = R(f,g)$ and $r = r(f,g)$. Since $r$ is a non-zero integer, by \eqref{eq:ch} we have 
		$$
		\Res(f,r)= r^{\deg(f)} = r^m.
		$$
		In addition, by \eqref{eq:uvr}, there exist two integer polynomials $p,q$ such that 
$$
pf+qg=r,
$$
 which, together with \eqref{eq:hq1q2} and \eqref{eq:qhL}, gives 
		\begin{equation*}
			\begin{split}
				\Res(f,r)&= \Res(f,pf+qg) = L(f)^{0-j}\Res(f,qg) \\
				&= L(f)^{-j}\Res(f,g)\Res(f,q),
			\end{split}
		\end{equation*}
		where $j =\deg(qg)= \deg(pf)$. So, we have 
		$$\Res(f,r)=r^m = L(f)^{-j}\Res(f,g)\Res(f,q),$$ 
		which implies 
		\begin{equation}\label{eq:Rr1}
			L(f)^{j}r^m = \Res(f,g)\Res(f,q).
		\end{equation}
		
		Similarly, we obtain 
		\begin{equation}\label{eq:Rr2}
			L(g)^{j}r^n = \Res(g,f)\Res(g,p).
		\end{equation}
		
		Note that by definition and by \eqref{eq:hq}, we have  
$$
R =|\Res(f,g)|=|\Res(g,f)|.
$$ 
		In view of $d=\gcd(L(f), L(g))$, we write $L(f)=ad, L(g)=bd$ for two coprime integers $a, b$ (that is $\gcd(a,b)=1$).  Then, combining \eqref{eq:Rr1} with \eqref{eq:Rr2}, we get 
		\begin{equation}  \label{eq:Rr3}
			R \mid a^j d^j r^m, \qquad R \mid b^j d^j r^n.
		\end{equation} 
		
		Now, for any prime factor $\ell$ of $R$, if $\ell^i \mid R$ and $\ell^{i+1} \nmid R$ for some positive integer $i$, 
		then by \eqref{eq:Rr3} and noticing  $\gcd(a,b)=1$, 
		we must have $\ell^i \mid d^j r^{\max(m,n)}$. 
		Hence, we have 
		$$
		R \mid d^j r^{\max(m,n)}. 
		$$
		This gives the first part of the theorem.  

Moreover, if both $f$ and $g$ are monic, then by \eqref{eq:Rr3} we have 
\begin{equation*}  
			R \mid r^m \quad \textrm{and} \quad R \mid r^n, 
		\end{equation*} 
and so, $R \mid r^{\min(m,n)}$. This completes the proof. 
	\end{proof}

We remark that for the general case in Theorem~\ref{thm:rR}, the power $\max(m,n)$ can not be replaced by $\min(m,n)$; 
see Example~\ref{ex:rR1} (for $d=1$) and Example~\ref{ex:rR2} (for $d>1$).

\begin{example} \label{ex:rR1}
		{\rm 
			Let $f(x) = 2x^{3}+3x^2-2$ and $g(x) =  3x-3$. 
			Then, $d=1$, $B(f,g) = r(f,g)=9 $, and $R(f,g) = 81$.
		}
\end{example}

\begin{example} \label{ex:rR2}
		{\rm 
			Let $f(x) = 2x^{3}+x^2-3x+2$ and $g(x) =  4x-2$. 
			Then, $d=2$, $B(f,g) = r(f,g)=2$,  and $R(f,g) = 64$. 
Moreover, in this case we can choose $j=3$, since 
$$
2f(x) - (x^2+x-1)g(x) = 2. 
$$
		}
\end{example}

	If $f(x)$ is monic (that is $L(f)=1$), by \eqref{eq:Rr1} we recover a special case of a result in \cite{Myerson1983}:
	
	\begin{corollary}\label{cor:rR}
		If moreover $f(x)$ is monic, then we have $R(f,g) \mid r(f,g)^m$, where $m=\deg(f)$.
	\end{corollary}
	
	By Theorem~\ref{thm:rR} and noticing $r(f,g) \mid R(f,g)$ and $d \mid R(f,g)$ (see Theorem~\ref{thm:uvR}), we directly get that $R(f,g)$ and $dr(f,g)$ have the same prime factors. 
This result is known to the experts. 

	\begin{corollary}  \label{cor:Rdr}
		$R(f,g)$ and $dr(f,g)$ have the same prime factors. 
	\end{corollary}
	
	We remark that when $d>1$, it can happen that $R(f,g)$ and $r(f,g)$ do not have the same prime factors; 
see Example~\ref{ex:Br1}.

	\section{Relations between $B(f,g)$ and $R(f,g)$}
	It is known to the experts that $B(f,g) \mid R(f,g)$. 
	We enhance this divisibility relation in the following result. 
Recall that $d = \gcd(L(f), L(g))$.
	
	\begin{theorem}\label{thm:BR1}
		We have 
		$$
		dB(f,g) \mid R(f,g).
		$$
	\end{theorem} 
	
	\begin{proof}
		Let $B = B(f,g)$ and $R = R(f,g)$. First, from \eqref{eq:uvB} we know that there exist uniquely two integer polynomials $p_{1}(x)$ and $q_{1}(x)$ such that $\deg(p_{1}) < \deg(g)$, $\deg(q_{1}) < \deg(f)$ and 
		\begin{equation}\label{eq:uvB1}
			p_{1}(x)f(x)+q_{1}(x)g(x)=B.
		\end{equation}
		By the definition of $B$, we have 
		\begin{equation}   \label{eq:u1v1}
			\gcd(\cont(p_1), \cont(q_1))=1.
		\end{equation}

		In addition, by Theorem~\ref{thm:uvR} there exist uniquely two integer polynomials $p_{2}(x)$ and $q_{2}(x)$ such that $\deg(p_{2})<\deg(g)$, $\deg(q_{2})<\deg(f)$ and 
		\begin{equation}\label{eq:uvR1}
			p_{2}(x)f(x)+q_{2}(x)g(x)=\frac{R}{d}.
		\end{equation}
		
	    Let $R_0=R/d$. Then, from \eqref{eq:uvB1} and \eqref{eq:uvR1}, we obtain 
		\begin{eqnarray*}
			&\frac{p_{1}(x)}{B}f(x)+\frac{q_{1}(x)}{B}g(x)=1,\\
			&\frac{p_{2}(x)}{R_0}f(x)+\frac{q_{2}(x)}{R_0}g(x)=1.
		\end{eqnarray*}
		By the uniqueness of the polynomials $p(x), q(x)$ in \eqref{eq:Bezout}, we must have 
		$$
		\frac{p_{1}(x)}{B}=\frac{p_{2}(x)}{R_0} \quad \textrm{and} \quad \frac{q_{1}(x)}{B}=\frac{q_{2}(x)}{R_0}. 
		$$
		So, we obtain 
		$$
		B \cont(p_2) = R_0 \cont(p_1), \qquad B \cont(q_2) = R_0 \cont(q_1). 
		$$
		If $B \nmid R_0$, then there is a prime number $\ell$ such that $\ell \mid B$, $\ell \mid \cont(p_1)$ and $\ell \mid \cont(q_1)$, 
		and so this contradicts with \eqref{eq:u1v1}.
		Hence, we must have $B \mid R_0$.
		This completes the proof. 
	\end{proof}
	
	When $d >1$, by Theorem~\ref{thm:BR1} we directly get $B(f,g) \ne R(f,g)$, and then also $r(f,g) \ne R(f,g)$.
	
	\begin{corollary}\label{cor:BRr1}
		If $d>1$, then $ B(f,g) \ne R(f,g) $ and $ r(f,g) \ne R(f,g)$. 
	\end{corollary}
	
	If $d=1$, 
	then either $B(f,g) = R(f,g)$ or $B(f,g) \ne R(f,g)$. Both cases can happen; see the following two examples. 
	
	\begin{example}  \label{ex:BrR1}
		{\rm 
			Let $f(x) = 2x^3 - x^2 - x$ and $g(x) = x^3 - x^2 +x + 1$. 
			Then, $B(f,g)=r(f,g)=R(f,g) = 2$.
		}
	\end{example}
	
	\begin{example}   \label{ex:BrR2}
		{\rm 
			Let $f(x) = 2x^3 + x^2 - x-1$ and $g(x) = x^3 - x^2 +x + 1$. 
			Then, $B(f,g)=r(f,g)=3$ and $R(f,g) = 27$.
		}
	\end{example}
	
	Moreover, combining Corollary~\ref{cor:Br} with Corollary~\ref{cor:BRr1} and noticing $r(f,g) \le B(f,g) \le R(f,g)$, we directly obtain the following result. 
	
	\begin{corollary}  \label{cor:BRr}
	   $B(f,g) = R(f,g)$ if and only if $r(f,g) = R(f,g)$.
	\end{corollary}
	
	Now, we establish a converse of the divisibility relation in Theorem~\ref{thm:BR1} as follows. 
	
	\begin{theorem} \label{thm:BR2}
		We have 
		$$R(f,g) \mid d^{m+n-1} B(f,g)^{\max(m,n)},$$
		where $m = \deg(f)$ and $n=\deg(g)$. 
Moreover, if both $f$ and $g$ are monic, then we have 
		$$R(f,g) \mid B(f,g)^{\min(m,n)}.$$
	\end{theorem}
	
	\begin{proof}
		Let $B = B(f,g)$ and $R = R(f,g)$. We have known that there exist uniquely two integer polynomials 
		$p_1(x)$ and $q_1(x)$ such that $\deg(p_1) < \deg(g)$, $\deg(q_1) < \deg(f)$ and 
		\begin{equation*}
			p_1(x)f(x)+q_1(x)g(x)=B.
		\end{equation*}
		Then, applying the same arguments as in the proof of Theorem~\ref{thm:rR}, we obtain 
		$$
		R \mid d^j B^{\max(m,n)}, 
		$$
		where $j=\deg(p_1f)=\deg(q_1g) \le m+n-1$. 
Also, when both $f$ and $g$ are monic, we have 
		$$R \mid B(f,g)^{\min(m,n)}.$$
		This completes the proof. 
	\end{proof}

We remark that for the general case in Theorem~\ref{thm:BR2}, the power $\max(m,n)$ can not be replaced by $\min(m,n)$; 
see Example~\ref{ex:rR1} (for $d=1$) and Example~\ref{ex:rR2} (for $d>1$).
	
	By Theorems~\ref{thm:BR1} and \ref{thm:BR2}, we directly get that $R(f,g)$ and $dB(f,g)$ have the same prime factors. This also can be seen from Corollary~\ref{cor:Rdr}. 

	\begin{corollary} \label{cor:RdB}
	   $R(f,g)$ and $dB(f,g)$ have the same prime factors. 
	\end{corollary}
	
	We remark that when $d>1$, it can happen that $B(f,g)$ and $R(f,g)$ do not have the same prime factors; 
see Example~\ref{ex:Br2}. 
	
	Combining Corollary~\ref{cor:Rdr} with Corollary~\ref{cor:RdB}, we directly get the following corollary. 
We also believe that this result is known to the experts, but we haven't found a specific reference for it. 
	
	\begin{corollary}
		If $d=1$, then 
		$B(f,g)$, $r(f,g)$ and $R(f,g)$ have the same prime factors. 
	\end{corollary}

	\section{Numerical data}
	
	In this section, we present some numerical data to see how often we have $B(f,g)=r(f,g)$  
	(or moreover $R(f,g)=r(f,g)$) for two coprime integer polynomials $f(x), g(x)$ which are given randomly. 
	
	First, for an integer polynomial $h(x) \in \Z[x]$, we denote by $\wH(h)$  the maximum of the absolute values of its coefficients. Recall that $L(h)$ stands for the leading cofficient of $h$. 
	
	Then, for three positive integers $m, n$ and $H$, we define the following set of pairs of coprime integer polynomials:
	\begin{align*}
		S_{m,n}(H) = & \Big\{(f,g) \in \Z[x]^2: \, \deg (f)=m, \deg (g) = n,  L(f)>0, \\
		& \quad L(g)>0, \textrm{$f$ and $g$ are coprime},  \wH(f) \le H, \wH(g) \le H \Big\}.
	\end{align*}
	Note that in the set $S_{m,n}(H)$, both $f$ and $g$ are assumed to have positive leading coefficients, 
	because 
	$$
	f(x)\Z[x] + g(x)\Z[x] = (\pm 1)f(x)\Z[x] + (\pm 1)g(x)\Z[x]. 
	$$

	In Table~\ref{tab:Br}, each value corresponds to $m, n, H$ is the following percentage:
	\begin{equation*}
		\frac{|\{(f,g)\in S_{m,n}(H): \, B(f,g) = r(f,g)\}|}{|S_{m,n}(H)|} \times 100\%, 
	\end{equation*}
	which are rounded to the nearest two decimal places in the table. 
	Recall that the probability that two randomly given integers are coprime is $6/\pi^2 \approx 0.6079$; see \cite[Theorem 332]{Hardy}. So, by Corollary~\ref{cor:Br}, we can roughly say that the above percentage is at least $60\%$ when $H$ is sufficiently large. However, Table~\ref{tab:Br} shows that this percentage is much larger (more than $90\%$). 
Moreover, when $(m,n)=(1,1)$, this percentages corresponding to $H=30, 50$ and $100$ are respectively: 
$90.29\%$, $90.32\%$, and $90.11\%$.

	Hence, we pose the following conjecture. 	It suggests that for two coprime integer polynomials $f(x), g(x)$ given randomly, we usually have $B(f,g) = r(f,g)$. 
	
	\begin{conjecture}
		For any positive integers $m$ and $n$, we have 
		\begin{equation*}
			\lim_{H \to \infty}\frac{|\{(f,g)\in S_{m,n}(H): \, B(f,g) = r(f,g)\}|}{|S_{m,n}(H)|} \ge \frac{9}{10}. 
		\end{equation*}
	\end{conjecture}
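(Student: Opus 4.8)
The plan is to reduce the conjecture to a prime-by-prime local density computation and then to evaluate the resulting Euler product. The starting point is Corollary~\ref{cor:Br}: whenever $d=\gcd(L(f),L(g))=1$ we already have $B(f,g)=r(f,g)$, and since the condition $\gcd(f,g)=1$ in $S_{n,m}(H)$ excludes only the pairs with vanishing resultant (a density-zero subset of all pairs with positive leading coefficients), this single input already accounts for a limiting density of $6/\pi^2\approx 0.608$. To reach the threshold $9/10$ we must also retain most of the pairs with $d>1$. The key structural reduction is that, by \eqref{eq:rBR} and Theorem~\ref{thm:Bdr}, the ratio $B(f,g)/r(f,g)$ is a positive integer all of whose prime factors divide $d$; hence $v_p(B)=v_p(r)$ automatically for every prime $p\nmid d$ (where $v_p$ is the $p$-adic valuation), and therefore
\[
B(f,g)=r(f,g) \iff v_p(B(f,g))=v_p(r(f,g)) \ \text{ for every prime } p\mid d .
\]
Thus the failure event $B\neq r$ decomposes as the union over primes $p$ of the local events $E_p=\{\,p\mid d \text{ and } v_p(B)>v_p(r)\,\}$.

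The second step turns this into an analytic density statement. In the usual ``random integer'' model, the coefficient vectors of $(f,g)$ become Haar-random in $\Z_p[x]$ independently across all primes $p$, and the valuations $v_p(B),v_p(r)$ depend only on the $p$-adic data of the coefficients. Consequently each $E_p$ is, up to a subset of arbitrarily small measure, determined by the coefficients modulo a fixed power $p^\ell$, so it has a well-defined local density $\delta_p$; moreover $\delta_p\le \Pr[p\mid d]=p^{-2}$, since $E_p$ forces $p$ to divide both leading coefficients. Because $\sum_p p^{-2}<\infty$, the contribution of large primes is negligible, and a standard sieve argument (equidistribution of integer coefficient vectors in $(\Z/p^\ell\Z)^{n+m+2}$ as $H\to\infty$, together with the Chinese Remainder Theorem to secure asymptotic independence across distinct primes) should give that the limit exists and equals the convergent Euler product
\[
\lim_{H\to\infty}\frac{|\{(f,g)\in S_{n,m}(H): B(f,g)=r(f,g)\}|}{|S_{n,m}(H)|}=\prod_{p}\bigl(1-\delta_p\bigr).
\]

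The final step is to prove $\prod_p(1-\delta_p)\ge 9/10$, which reduces to bounding each local density $\delta_p$ from above, with exact values needed only for the small primes $p=2,3,5$ since the tail $\prod_{p>P}(1-\delta_p)$ is close to $1$. This requires an explicit description of $v_p(B)$ and $v_p(r)$ in terms of the coefficients. The case $(n,m)=(1,1)$ already exhibits the mechanism and shows that $\delta_p$ is a genuinely nontrivial quantity: there one computes $B(f,g)=R(f,g)/d$, whereas $r(f,g)$ can be strictly smaller, e.g.\ $f(x)=2x+1,\,g(x)=2x+3$ give $R=4$, $d=2$, $B=2$ but $r=1$, so that $B\neq r$.

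I anticipate two main obstacles, the first being by far the more serious. One must obtain a closed-form, sufficiently uniform criterion for $v_p(B)>v_p(r)$ at a fixed prime, and here $r(f,g)$ is delicate: it is the constant term of a Gr\"obner basis of the ideal $(f,g)$ in $\Z[x]$, and extracting $v_p(r)$ in a form amenable to a density count appears difficult, with both the required precision $\ell$ and the value $\delta_p$ depending intricately on $n$ and $m$. The second obstacle is the analytic bookkeeping justifying the interchange of $\lim_H$ with the infinite product, which needs uniform error estimates (a level-of-distribution input) as $H\to\infty$. The numerical data, in which the $(1,1)$ percentages cluster just above $90\%$, indicate that the bound $9/10$ is essentially sharp for $(1,1)$, so the estimates on $\delta_2$ and $\delta_3$ cannot be wasteful; this near-tightness is precisely what makes the inequality hard rather than a soft consequence of the coprime-leading-coefficient count alone.
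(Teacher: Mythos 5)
First, note that the statement you are trying to prove is posed in the paper as a \emph{conjecture}: the authors offer no proof, only the numerical evidence of Table~\ref{tab:Br} together with the observation (via Corollary~\ref{cor:Br} and the classical density $6/\pi^2$ of coprime pairs of leading coefficients) that the limiting proportion, if it exists, should be at least about $60\%$. So there is no proof in the paper to compare yours against, and the real question is whether your argument closes the gap between $6/\pi^2$ and $9/10$. It does not. Your structural reduction is sound: since $r(f,g)\mid B(f,g)$ by \eqref{eq:rBR} and $B(f,g)\mid d^k r(f,g)$ by Theorem~\ref{thm:Bdr}, the ratio $B/r$ is an integer whose prime factors all divide $d$, so the failure event is indeed the union over primes $p$ of the events $E_p=\{p\mid d,\ v_p(B)>v_p(r)\}$, and your example $f=2x+1$, $g=2x+3$ (where $B=2$, $r=1$) correctly shows these events are nonempty. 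But the only quantitative bound you actually establish is $\delta_p\le p^{-2}$, and $\sum_p p^{-2}\approx 0.4522$, so the union bound gives a success density of at least $1-0.4522$, i.e.\ nothing beyond the trivial $6/\pi^2$ floor the paper already records. To reach $9/10$ you would need $\sum_p\delta_p\lesssim 0.105$, which forces genuinely nontrivial upper bounds on $\delta_p$ for every prime up to roughly $100$ (the tail $\sum_{p>100}p^{-2}$ is only then small enough to absorb), and you give no mechanism for computing or bounding any single $\delta_p$ beyond the trivial one. Since the Table~\ref{tab:Br} data for $(n,m)=(1,1)$ sit at about $90.1\%$--$90.3\%$, there is essentially no slack: the inequality lives or dies on exact evaluation of the small-prime local densities, which is precisely the part you defer.

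The second unproved ingredient is the analytic framework itself: you assert that each $E_p$ is (up to small measure) determined by the coefficients modulo a fixed power $p^{\ell}$, that the local densities exist, that the limit over $H$ exists, and that it factors as the Euler product $\prod_p(1-\delta_p)$. None of this is automatic here, because $r(f,g)$ is defined through the ideal $(f,g)\cap\Z$ (computationally, a Gr\"obner basis constant), and you give no closed-form criterion for $v_p(r)$ in terms of finitely many $p$-adic digits of the coefficients, nor the uniform error estimates needed to interchange the limit in $H$ with the infinite product. You flag both obstacles honestly, which is to your credit, but as it stands the proposal is a research plan rather than a proof: the statement remains open, and your argument, if completed only with the bounds you actually justify, proves a weaker lower bound than the one conjectured.
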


	\begin{table}
		\begin{tabular}{|c|c|c|c|c|c|} \hline
			\diagbox{$(m, n)$}{$H$} & 2 & 3	& 4	& 5 & 6  \\ \hline
			(1, 1) & 97.44\% & 95.90\% & 93.61\% & 94.73\% & 92.13\%   \\ \hline
			(1, 2) & 98.58\% & 98.04\% & 96.53\% & 97.48\% & 95.98\%  \\ \hline
			(1, 3) & 99.36\% & 99.09\% & 98.13\% & 98.81\% &98.04\%   \\ \hline
			(2, 2) & 99.00\% & 99.10\% & 98.08\% & 98.78\% & 98.09\%    \\ \hline
			(2, 3) & 99.52\% & 99.55\% & 99.04\% & 99.43\% & 99.03\%    \\ \hline
			(3, 3) & 99.69\% & 99.78\% & 99.51\% & 99.72\% & 99.51\%  \\ \hline
		\end{tabular}
		\vspace{0.4cm}
		\caption{Percentage of $B(f,g)=r(f,g)$}
		\label{tab:Br}
	\end{table}

	From Corollary~\ref{cor:BRr}, we know that the frequency of $B(f,g)=R(f,g)$ is the same as that of $r(f,g) = R(f,g)$.
	So, we only need to consider how often we have $B(f,g) = R(f,g)$. 
	
	In Table~\ref{tab:BR}, each value corresponds to $m, n, H$ is the following percentage:
	\begin{equation*}
		\frac{|\{(f,g)\in S_{m,n}(H): \, B(f,g) = R(f,g)\}|}{|S_{m,n}(H)|} \times 100\%,
	\end{equation*}
	which are rounded to the nearest two decimal places in the table. 
	As the above, by Corollary~\ref{cor:BRr1} we can roughly say that this percentage is at most $61\%$ when $H$ is sufficiently large. However, Table~\ref{tab:BR} shows that this percentage is much smaller except the case when $(m,n)=(1,1)$. 

	Table~\ref{tab:BR} suggests that for two coprime integer polynomials $f(x), g(x)$ given randomly, 
	we often have $B(f,g) \ne R(f,g)$ when $\deg (f)$ and $\deg (g)$ are large. 
	Regarding Table~\ref{tab:BR}, we pose the following conjecture. 
	
	\begin{conjecture}
		For any positive integers $m$ and $n$ with $m \ge 2$ and $n \ge 2$, we have 
		\begin{equation*}
			\lim_{H \to \infty}\frac{|\{(f,g)\in S_{m,n}(H): \, B(f,g) = R(f,g)\}|}{|S_{m,n}(H)|} < \frac{1}{2}. 
		\end{equation*}
	\end{conjecture}

Given two coprime integer polynomials of degree one $f(x)=ax+b$ and $g(x)=cx+d$, 
we directly have $R(f,g) = |ad -bc|$ and 
$$
\frac{c}{bc - ad} \cdot f(x) - \frac{a}{bc-ad} \cdot g(x) = 1. 
$$
So, by definition we have $B(f, g) = |ad -  bc|/\gcd(a,c)$. 
Hence, $B(f,g) = R(f,g)$ whenever $\gcd(a,c)=1$. 
This can explain why the percentages in Table~\ref{tab:BR} corresponding to the case when $(m,n)=(1,1)$ are large.

	\begin{table}
		\begin{tabular}{|c|c|c|c|c|c|} \hline
			\diagbox{$(m, n)$}{$H$} & 2 & 3	& 4	& 5 & 6   \\ \hline
			(1, 1) & 82.05\% & 81.54\% & 75.31\% & 79.47\% & 70.35\%   \\ \hline
			(1, 2) & 64.15\% & 61.14\% & 51.73\% &	58.16\% & 46.13\%   \\ \hline
			(1, 3) & 62.80\% & 60.44\% & 50.56\% & 57.52\% & 45.04\%  \\ \hline
			(2, 2) & 46.14\% & 49.81\% & 41.47\% & 49.08\% & 37.76\%  \\ \hline
			(2, 3) & 46.61\% & 49.54\% & 41.33\% & 48.66\% & 37.66\% \\ \hline
			(3, 3) & 46.29\% & 49.24\% & 41.24\% & 48.25\% & 37.74\%  \\ \hline
		\end{tabular}
		\vspace{0.4cm}
		\caption{Percentage of $B(f,g)=R(f,g)$}
		\label{tab:BR}
	\end{table}

\section{Comments}
	In this section, we want to say some more words about the resultant and relevant concepts. 

The resultant of two univariate polynomials over the integers $\Z$ is an important concept and useful in 
algebra and number theory. 
It has been generalized to a more general setting (see \cite[Section 4.2]{BPR} or \cite[Chapter 7]{Mishra}) and also 
to multivariate polynomials (see \cite[Chapter 3, Section 6]{Cox}). 
One can also interpret it from the viewpoint of algebraic geometry; see \cite[Section V.2]{EH}. 
From this viewpoint, one may define new integers with respect to two coprime integer polynomials. 
 
Moreover,  the subresultants of two uivariate integer polynomials 
are defined by means of submatrices of their Sylvester matrix 
(see  \cite[Section 4.2]{BPR} or \cite[Chapter 7]{Mishra}  for a more general setting). 
The subresultants can be viewed as a generalization of the resultant, and they are very useful for computing the resultant and the greatest common divisor of those two polynomials.  

In addition, since the polynomial ring $\Z[x]$ is a unique factorization domain, for the two coprime polynomials $f,g \in \Z[x]$ given above, their greatest common divisor $\gcd(f,g)$ exists in $\Z[x]$ and in fact equals to $\gcd(\cont(f), \cont(g))$ (that is, the greatest common divisor of all the coefficients of $f$ and $g$). So, the integer $\gcd(f,g)$ is usually different from the three integers 
$B(f,g)$, $r(f,g)$ and $R(f,g)$ we have studied.

	\section*{Acknowledgement}
	For the research, Z. Liu and X. Li were supported by the Guangdong College Students' 
	Innovation and Entrepreneurship Training Program (No. S202410574067); 
	M. Sha was supported by the Guangdong Basic and Applied Basic Research Foundation (No. 2025A1515010635).

\end{document}